\providecommand{\U}[1]{\protect\rule{.1in}{.1in}}
\DeclareMathSymbol{\subsetneqq}{\mathbin}{AMSb}{36}
\theoremstyle{plain}
\numberwithin{equation}{section}
\newtheorem{theorem}{Theorem}[section]
\newtheorem{lemma}{Lemma}[section]
\newtheorem{remark}{Remark}[section]
\begin{document}
\begin{center}
Asymptotic for a second order evolution equation with convex potential and
vanishing damping term 

\end{center}

\begin{center}

Ramzi MAY

Department of Mathematics and Statistics

College of Sciences

King Faisal University

P.O. 400 Al Ahsaa 31982, Kingdom of Saudi Arabia

E-mail: rmay@kfu.edu.sa\bigskip
\end{center}

\noindent\textbf{Abstract:} In this short note, we recover by a different
method the new result due to Attouch, Chbani, Peyrouqet and Redont concerning
the weak convergence as $t\rightarrow+\infty$ of solutions $x(t)$ to the
second order differential equation%
\[
x^{\prime\prime}(t)+\frac{K}{t}x^{\prime}(t)+\nabla\Phi(x(t))=0,
\]
where $K>3$ and $\Phi$\ is a smooth convex function defined on an Hilbert
Space $\mathcal{H}.$ Moreover, we improve their result on the rate of
convergence of $\Phi(x(t))-\min\Phi.$\medskip

\noindent{\textbf{keywords}: dynamical systems, asymptotically small
dissipation, asymptotic behavior, energy function, convex function, convex
optimization.\medskip}

\section{Introduction and statement of the result}

Let $\mathcal{H}$ be a real Hilbert space with inner product and norm
respectively denoted by $\langle.,.\rangle$ and $\left\Vert .\right\Vert .$ In
a very recent work \cite{APR}, Attouch, Chbani, Peypouquet and Redont have
considered the following second order differential equation:
\begin{equation}
x^{\prime\prime}(t)+\gamma(t)x^{\prime}(t)+\nabla\Phi(x(t))=0,\label{eq1}%
\end{equation}
where $\gamma(t)=\frac{K}{t}$ with $K$ is a non negative constant and
$\Phi:\mathcal{H}\rightarrow\mathbb{R}$ is a convex continuously
differentiable function. By developing a method due to Su, Boyed, and Condes
\cite{SBC}, they proved the following result:

\begin{theorem}
[Attouch, Chbani, Peypouquet, and Redont]\label{th}Assume that $K>3$ and the
set $\arg\min\Phi\equiv\{x\in\mathcal{H}:~\Phi(x)\leq\Phi(y)~\forall
y\in\mathcal{H}\}$ is nonempty$.$ Let $x:[t_{0},+\infty\lbrack\rightarrow
\mathcal{H}$ be a solution to (\ref{eq1}). Then $x(t)$ converges weakly in
$\mathcal{H}$ as $t\rightarrow+\infty$ to some element of $\arg\min\Phi.$
Moreover the energy function%
\begin{equation}
W(t)\equiv\frac{1}{2}\left\Vert x^{\prime}(t)\right\Vert ^{2}+\Phi
(x(t))-\min\Phi\label{eq2}%
\end{equation}
satisfies $W(t)=O(t^{-2})$ as $t\rightarrow+\infty.$
\end{theorem}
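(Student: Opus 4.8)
The plan is to run a Lyapunov-function argument in the spirit of Su--Boyd--Cand\`{e}s \cite{SBC}, and then deduce the weak convergence from Opial's lemma; the only extra ingredient is to squeeze out slightly more integrability than the $O(t^{-2})$ bound by itself needs. Fix $x^{*}\in\arg\min\Phi$, write $\Phi^{*}=\min\Phi$, $A(t)=\Phi(x(t))-\Phi^{*}\geq0$, $v(t)=x(t)-x^{*}$, and set
\[
\mathcal{E}(t)=t^{2}A(t)+\tfrac12\bigl\Vert (K-1)\,v(t)+t\,x'(t)\bigr\Vert^{2}.
\]
Using the equation in the form $x''=-\tfrac{K}{t}x'-\nabla\Phi(x)$, the auxiliary vector $u(t):=(K-1)v(t)+t\,x'(t)$ satisfies $u'(t)=-t\,\nabla\Phi(x(t))$, whence
\[
\mathcal{E}'(t)=2t\,A(t)-(K-1)\,t\,\langle v(t),\nabla\Phi(x(t))\rangle\;\leq\;(3-K)\,t\,A(t)\;\leq\;0,
\]
the first inequality being the convexity estimate $\langle v,\nabla\Phi(x)\rangle\geq A$ and the second $K\geq3$. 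As $\mathcal{E}\geq0$, this at once gives $A(t)=O(t^{-2})$, the boundedness of $\Vert u(t)\Vert$, and, after integration, $\int_{t_{0}}^{\infty}t\,A(t)\,dt<\infty$ (here $K>3$ is essential).

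I would next read off the energy estimate. From $u(t)=(K-1)v(t)+t\,v'(t)$ one gets $\frac{d}{dt}\bigl(t^{K-1}v(t)\bigr)=t^{K-2}u(t)$; integrating and using $\Vert u\Vert\leq M$ with $K>1$ shows that $v(t)=x(t)-x^{*}$ is bounded, so $x(\cdot)$ is bounded. Hence $\Vert t\,x'(t)\Vert=\Vert u(t)-(K-1)v(t)\Vert$ is bounded, i.e. $\Vert x'(t)\Vert=O(t^{-1})$, and with $A(t)=O(t^{-2})$ we conclude $W(t)=\tfrac12\Vert x'(t)\Vert^{2}+A(t)=O(t^{-2})$, the second assertion of the theorem. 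Separately, differentiating $t^{2}W(t)$ and using $W'(t)=-\tfrac{K}{t}\Vert x'(t)\Vert^{2}$ yields $\frac{d}{dt}\bigl(t^{2}W(t)\bigr)=2t\,A(t)-(K-1)\,t\,\Vert x'(t)\Vert^{2}$; integrating this and using $t^{2}W\geq0$ together with $\int^{\infty}t\,A<\infty$ gives the extra information $\int_{t_{0}}^{\infty}t\,\Vert x'(t)\Vert^{2}\,dt<\infty$.

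For the weak convergence I would apply Opial's lemma with $S=\arg\min\Phi$. Its second hypothesis holds trivially: $x(\cdot)$ is bounded, $\Phi(x(t))\to\Phi^{*}=\min\Phi$ since $A(t)\to0$, and a convex continuous $\Phi$ is weakly lower semicontinuous, so every weak sequential cluster point of $x(t)$ lies in $\arg\min\Phi$. The first hypothesis — existence of $\lim_{t\to\infty}\Vert x(t)-z\Vert$ for each $z\in\arg\min\Phi$ — is the heart of the matter. Put $h(t)=\tfrac12\Vert x(t)-z\Vert^{2}$; then $h''+\tfrac{K}{t}h'=\Vert x'\Vert^{2}-\langle x-z,\nabla\Phi(x)\rangle\leq\Vert x'\Vert^{2}$ (since $\langle x-z,\nabla\Phi(x)\rangle\geq\Phi(x)-\min\Phi\geq0$), and multiplying by $t$ this reads $\frac{d}{dt}\bigl(t\,h'(t)+(K-1)h(t)\bigr)=t\Vert x'(t)\Vert^{2}-t\langle x(t)-z,\nabla\Phi(x(t))\rangle\leq t\Vert x'(t)\Vert^{2}$. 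Because the right-hand side is in $L^{1}$ and $q(t):=t\,h'(t)+(K-1)h(t)$ is bounded below — as $h\geq0$ and $\vert h'(t)\vert\leq\Vert x(t)-z\Vert\,\Vert x'(t)\Vert=O(t^{-1})$ — the function $t\mapsto q(t)-\int_{t_{0}}^{t}s\Vert x'(s)\Vert^{2}\,ds$ is nonincreasing and bounded below, so $q(t)$ has a finite limit $L$. Writing $q(t)=L+\varepsilon(t)$ with $\varepsilon(t)\to0$ and using $\frac{d}{dt}\bigl(t^{K-1}h(t)\bigr)=t^{K-2}q(t)=t^{K-2}\bigl(L+\varepsilon(t)\bigr)$, one more integration and a weighted Ces\`{a}ro estimate give $h(t)\to L/(K-1)$, so $\lim_{t}\Vert x(t)-z\Vert$ exists. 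Opial's lemma then produces the weak convergence of $x(t)$ to a point of $\arg\min\Phi$.

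The step I expect to be the main obstacle is precisely Opial's first hypothesis: the elementary estimates only yield $h'(t)=O(t^{-1})$, which is not integrable, so nothing forces $h$ to converge. The way around it is to first harvest the sharper bound $\int^{\infty}t\,\Vert x'(t)\Vert^{2}\,dt<\infty$ by combining the two energy identities above, and then to exploit the first-order relation $t\,h'+(K-1)h\to L$; the rest is routine. A milder difficulty, the boundedness of the trajectory, is handled by writing the kinetic term of $\mathcal{E}$ through $u(t)=(K-1)v(t)+t\,x'(t)$ and integrating the linear relation $\frac{d}{dt}(t^{K-1}v)=t^{K-2}u$.
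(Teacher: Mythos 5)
Your proof is correct, but it follows a genuinely different route from the one in the paper. You use the Su--Boyd--Cand\`es/Attouch--Chbani--Peypouquet--Redont Lyapunov function $\mathcal{E}(t)=t^{2}A(t)+\tfrac12\Vert (K-1)v+tx'\Vert^{2}$, whose monotonicity (via $u'=-t\nabla\Phi(x)$) immediately yields $t^{2}A(t)=O(1)$, boundedness of $u$, and $\int tA<\infty$ for $K>3$; boundedness of the trajectory and $\Vert x'\Vert=O(t^{-1})$ then come from integrating $\frac{d}{dt}(t^{K-1}v)=t^{K-2}u$, and the identity $\frac{d}{dt}(t^{2}W)=2tA-(K-1)t\Vert x'\Vert^{2}$ gives $\int t\Vert x'\Vert^{2}<\infty$. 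The paper avoids this Lyapunov function altogether: it works with $h(t)=\tfrac12\Vert x-x^{*}\Vert^{2}$ and $W$ directly, multiplies the inequality $tW\leq\tfrac32 t\Vert x'\Vert^{2}-th''-t\gamma h'$ by $t$, absorbs the kinetic term using $(t^{2}W)'=2tW-t^{2}\gamma\Vert x'\Vert^{2}$, integrates, and bounds $h$ by Gronwall, which yields $\sup t^{2}W<\infty$ and $\int sW<\infty$ in one stroke. What the paper's route buys is generality (any damping with $\gamma(t)\geq K/t$ and $\int[(t\gamma)']_{+}<\infty$) and the sharper conclusion $W=o(t^{-2})$; your computation of $u'$ is tied to $\gamma(t)=K/t$ exactly, but that is all Theorem \ref{th} requires, and the $O(t^{-2})$ bound is exactly what is claimed. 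Your treatment of Opial's lemma is also different in detail but sound: you obtain $\lim\Vert x(t)-z\Vert$ from the convergence of $q(t)=th'+(K-1)h$ and a weighted Ces\`aro argument, whereas the paper integrates the damped inequality for $h'$ against $e^{-\Gamma(t,\tau)}$ and uses Fubini to show $[h']_{+}\in L^{1}$. (A cosmetic point only: you have swapped the labels of Opial's two hypotheses relative to the paper's statement, but you verify both.)
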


In this note, we establish, by using a different method, a slightly improved
version of the previous theorem. Precisely, we prove the following result.

\begin{theorem}
\label{th1}Assume that $K>3$ and $\arg\min\Phi\neq\emptyset.$ Let
$x:[t_{0},+\infty\lbrack\rightarrow\mathcal{H}$ be a solution to (\ref{eq1}).
Then $x(t)$ converges weakly in $\mathcal{H}$ as $t\rightarrow+\infty$ to some
element of $\arg\min\Phi.$ Moreover $W(t)=\circ(t^{-2})$ as $t\rightarrow
+\infty.$
\end{theorem}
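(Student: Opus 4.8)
The plan is to combine a one‑parameter family of Lyapunov functionals with Opial's lemma. Fix $x^{*}\in\arg\min\Phi$, set $\phi(t)=\Phi(x(t))-\min\Phi\ge 0$, $h(t)=\tfrac12\|x(t)-x^{*}\|^{2}$, and for $\lambda$ in the interval $(2,K-1)$ — which is nonempty precisely because $K>3$ — introduce
\[
\mathcal{E}_{\lambda}(t)=t^{2}\phi(t)+\tfrac12\bigl\|\lambda(x(t)-x^{*})+t\,x'(t)\bigr\|^{2}+\tfrac{\lambda(K-1-\lambda)}{2}\,\|x(t)-x^{*}\|^{2}\ \ge\ 0 .
\]
Differentiating along the flow, using $x''=-\tfrac{K}{t}x'-\nabla\Phi(x)$, the identity $\langle x',\nabla\Phi(x)\rangle=\phi'$, and the convexity inequality $\langle x(t)-x^{*},\nabla\Phi(x(t))\rangle\ge\phi(t)$ (the coefficient $\lambda(K-1-\lambda)$ being chosen so as to cancel the $\langle x-x^{*},x'\rangle$ term), I expect to obtain
\[
\mathcal{E}_{\lambda}'(t)\ \le\ -(\lambda-2)\,t\,\phi(t)-(K-1-\lambda)\,t\,\|x'(t)\|^{2}\ \le\ 0 .
\]
Integrating and using $\mathcal{E}_{\lambda}\ge 0$ then gives the two summability properties
\[
\int_{t_{0}}^{+\infty} t\,\phi(t)\,dt<+\infty ,\qquad \int_{t_{0}}^{+\infty} t\,\|x'(t)\|^{2}\,dt<+\infty .
\]

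Next I would upgrade these to the decay $W(t)=o(t^{-2})$. Differentiating (\ref{eq2}) gives $W'(t)=\langle x'(t),x''(t)+\nabla\Phi(x(t))\rangle=-\tfrac{K}{t}\|x'(t)\|^{2}\le0$, so $W$ is nonincreasing; and the two bounds above yield $\int_{t_{0}}^{+\infty}t\,W(t)\,dt<+\infty$. Consequently, for $t>2t_{0}$,
\[
\tfrac{3}{8}\,t^{2}\,W(t)\ \le\ \int_{t/2}^{t}s\,W(s)\,ds\ \xrightarrow[t\to+\infty]{}\ 0 ,
\]
hence $W(t)=o(t^{-2})$. In particular $\Phi(x(t))\to\min\Phi$ and $x'(t)\to 0$.

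Finally, weak convergence follows from Opial's lemma once we check that $\lim_{t\to+\infty}\|x(t)-x^{*}\|$ exists for every $x^{*}\in\arg\min\Phi$ and that every weak sequential limit point of $x(t)$ lies in $\arg\min\Phi$; the latter is immediate since $\Phi$ is convex and continuous, hence weakly lower semicontinuous, while $\Phi(x(t))\to\min\Phi$. For the former, $h(t)=\tfrac12\|x(t)-x^{*}\|^{2}$ satisfies $h''(t)+\tfrac{K}{t}h'(t)=\|x'(t)\|^{2}-\langle x(t)-x^{*},\nabla\Phi(x(t))\rangle\le\|x'(t)\|^{2}=:g(t)$ with $g\in L^{1}(t_{0},+\infty)$ (recall $\int t\,g<+\infty$). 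Setting $\rho(t)=t\,h'(t)+(K-1)h(t)$ one gets $\rho'(t)=t\bigl(h''(t)+\tfrac{K}{t}h'(t)\bigr)\le t\,g(t)$, so $\rho(t)$ tends to a limit in $[-\infty,+\infty)$; the value $-\infty$ is impossible because $(K-1)h\ge0$ would then force $t\,h'(t)\to-\infty$ and hence $h(t)\to-\infty$, contradicting $h\ge 0$. Thus $\rho(t)\to\rho_{\infty}\in\mathbb{R}$, and since $\frac{d}{dt}\bigl(t^{K-1}h(t)\bigr)=t^{K-2}\rho(t)$, a Cesàro argument gives $h(t)\to\rho_{\infty}/(K-1)$. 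Opial's lemma then yields weak convergence of $x(t)$ to some point of $\arg\min\Phi$.

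The crux — and the only place where the strict inequality $K>3$ is essential — is the first step: to squeeze \emph{both} $\int t\,\phi<\infty$ and $\int t\,\|x'\|^{2}<\infty$ out of a single energy, one is forced to take the damping parameter in the \emph{open} interval $2<\lambda<K-1$, which is available only for $K>3$; everything after that (monotonicity of $W$, the little‑o upgrade, and the Opial argument) is comparatively standard.
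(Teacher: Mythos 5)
Your proposal is correct, and every step checks out: the cross terms in $\mathcal{E}_{\lambda}'$ do cancel exactly as you claim (the coefficient $\lambda(K-1-\lambda)$ kills the $\langle x-x^{*},x'\rangle$ contribution and the $t^{2}\langle x',\nabla\Phi(x)\rangle$ terms cancel against $\frac{d}{dt}(t^{2}\phi)$), the choice $\lambda\in(2,K-1)$ gives both summability statements, the monotonicity of $W$ plus $\frac38 t^{2}W(t)\le\int_{t/2}^{t}sW(s)\,ds$ yields the $o(t^{-2})$ rate, and the $\rho=th'+(K-1)h$ argument correctly produces $\lim_{t\to\infty}h(t)$. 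However, your route is genuinely different from the paper's. The paper deliberately avoids the Su--Boyd--Cand\`es/Attouch-et-al.\ Lyapunov functional that you use (your $\mathcal{E}_{\lambda}$ is essentially their construction): it works only with $h(t)=\frac12\|x(t)-x^{*}\|^{2}$ and $W$, combines $W(t)\le\frac32\|x'\|^{2}-h''-\gamma h'$ with $(t^{2}W)'=2tW-t^{2}\gamma\|x'\|^{2}$, integrates, and closes the resulting inequality with Gronwall's lemma to bound $h$, which then gives $\sup t^{2}W<\infty$ and $\int sW\,ds<\infty$ simultaneously; the $o(t^{-2})$ rate is obtained there by showing $[(t^{2}W)']_{+}\in L^{1}$ so that $\lim t^{2}W$ exists and must vanish, and the limit of $h$ is obtained by integrating $h''+\gamma h'\le\|x'\|^{2}$ against the factor $e^{\Gamma(t,t_{0})}$ and applying Fubini to get $[h']_{+}\in L^{1}$. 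What the paper's method buys is generality: it covers any damping $\gamma\ge K/t$ with $\int[(t\gamma)']_{+}\,dt<\infty$, not just $\gamma=K/t$, whereas your computation is tied to the exact coefficient $K/t$ through the cancellations in $\mathcal{E}_{\lambda}'$. What your method buys is a shorter, self-contained argument in which boundedness of the trajectory comes for free from $\mathcal{E}_{\lambda}(t)\ge\frac{\lambda(K-1-\lambda)}{2}\|x(t)-x^{*}\|^{2}$, your little-$o$ upgrade is more elementary than the paper's, and your Ces\`aro treatment of $\rho$ is a clean alternative to the integrating-factor/Fubini step.
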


\begin{remark}
In \cite{M}, we studied the asymptotic behavior as $t\rightarrow+\infty$ of
solution to equation (\ref{eq1}) when the damping term $\gamma(t)$ behaves,
for $t$ large enough, like $\frac{K}{t^{\alpha}}$ with $K>0$ and $\alpha
\in\lbrack0,1[.$ We proved that if $\arg\min\Phi\neq\emptyset$ then every
solution to (\ref{eq1})\ converges weakly in $\mathcal{H}$ to some element of
$\arg\min\Phi.$ Hence, Theorem \ref{th} and Theorem \ref{th1}\ extend this
result to the limit case corresponding to $\alpha=1$.
\end{remark}

\section{Proof of Theorem \ref{th1}}

We will prove Theorem \ref{th1} in a more general setting. Indeed, we will
assume that the damping term $\gamma$ in Equation (\ref{eq1}) is a real
function defined on $[t_{0},+\infty\lbrack$ which belongs to the class
$W_{loc}^{1,1}([t_{0},+\infty\lbrack,\mathbb{R})$ and satisfies:
\begin{equation}
\text{There exists }K>3\text{ such that }\gamma(t)\geq\frac{K}{t}~\forall
t\geq t_{0}, \label{eq4}%
\end{equation}
and%
\begin{equation}
\int_{t_{0}}^{+\infty}\left[  (t\gamma(t))^{\prime}\right]  _{+}dt<+\infty,
\label{EQ}%
\end{equation}
where $\left[  (t\gamma(t))^{\prime}\right]  _{+}\equiv\max\{(t\gamma
(t))^{\prime},0\}$ is the positive part of $(t\gamma(t))^{\prime}.$

A typical examples of functions $\gamma$ satisfying (\ref{eq4}) and (\ref{EQ})
are $\gamma(t)=\frac{K}{a+t}$ with $a\in\mathbb{R}$ and $K>3.$

\begin{proof}
[Proof of Theorem \ref{th1}]We will use a modified version of a method
introduced by Cabot et Frankel in \cite{CF} and recently developed in \cite{M}.

\noindent Let $x^{\ast}\in\arg\min\Phi$ and define the function $h:[t_{0}%
,+\infty\lbrack\rightarrow\mathbb{R}^{+}$ by $h(t)=\frac{1}{2}\left\Vert
x(t)-x^{\ast}\right\Vert ^{2}.$ By differentiating, we have%
\begin{align*}
h^{\prime}(t)  &  =\langle x^{\prime}(t),x(t)-x^{\ast}\rangle,\\
h^{\prime\prime}(t)  &  =\left\Vert x^{\prime}(t)\right\Vert ^{2}+\langle
x^{^{\prime\prime}}(t),x(t)-x^{\ast}\rangle.
\end{align*}
Combining these last equalities and using Equation (\ref{eq1}), we get%
\begin{equation}
h^{\prime\prime}(t)+\gamma(t)h^{\prime}(t)=\left\Vert x^{\prime}(t)\right\Vert
^{2}+\langle\nabla\Phi(x(t)),x^{\ast}-x(t)\rangle. \label{equ}%
\end{equation}
Using now the convexity inequality%
\begin{equation}
\Phi(x^{\ast})\geq\Phi(x)+\langle\nabla\Phi(x),x^{\ast}-x\rangle, \label{M}%
\end{equation}
and the definition (\ref{eq2}) of the energy function $W,$ we obtain%
\begin{equation}
W(t)\leq\frac{3}{2}\left\Vert x^{\prime}(t)\right\Vert ^{2}-h^{\prime\prime
}(t)-\gamma(t)h^{\prime}(t). \label{eq5}%
\end{equation}
On the other hand, in view of (\ref{eq1}),%
\begin{align*}
W^{\prime}(t)  &  =\langle x^{\prime}(t),x(t)\rangle+\langle\nabla
\Phi(x(t)),x^{\prime}(t)\rangle\\
&  =-\gamma(t)\left\Vert x^{\prime}(t)\right\Vert ^{2}.
\end{align*}
Hence
\begin{equation}
(t^{2}W(t))^{\prime}=2tW(t)-t^{2}\gamma(t)\left\Vert x^{\prime}(t)\right\Vert
^{2}. \label{eq6}%
\end{equation}
Using now assumption (\ref{eq4}), we get%
\begin{align}
\frac{3}{2}t\left\Vert x^{\prime}\right\Vert ^{2}  &  \leq\frac{3}{2K}%
t^{2}\gamma(t)\left\Vert x^{\prime}(t)\right\Vert ^{2}\nonumber\\
&  =\frac{3}{K}tW(t)-\frac{3}{2K}\left(  t^{2}W(t)\right)  ^{\prime}.
\label{eq7}%
\end{align}
Multiplying (\ref{eq5}) by $t$ and using Inequality (\ref{eq7}), we obtain%
\[
(1-\frac{3}{K})tW(t)+\frac{3}{2K}\left(  t^{2}W(t)\right)  ^{\prime}%
\leq-th^{\prime\prime}(t)-t\gamma(t)h^{\prime}(t).
\]
Integrating this last inequality on $[t_{0},t],$ we get after simplification%
\begin{align}
(1-\frac{3}{K})\int_{t_{0}}^{t}sW(s)ds+\frac{3}{2K}\left(  t^{2}W(t)\right)
&  \leq C_{0}-th^{\prime}(t)+(1-t\gamma(t))h(t)\nonumber\\
&  +\int_{t_{0}}^{t}(s\gamma(s))^{\prime}h(s)ds, \label{Ra}%
\end{align}
where $C_{0}=\frac{3}{2K}\left(  t_{0}^{2}W(t_{0})\right)  +t_{0}h^{\prime
}(t_{0})-h(t_{0}).$

\noindent Let $\varepsilon>0$ such that $K>3+3\varepsilon.$ By using
(\ref{eq4}), we obtain from the inequality (\ref{Ra})%
\begin{align*}
(1-\frac{3}{K})\int_{t_{0}}^{t}sW(s)ds+\frac{3}{2K}\left(  t^{2}W(t)\right)
+\varepsilon h(t)  &  \leq C_{0}-th^{\prime}(t)-(K-1-\varepsilon)h(t)\\
&  +\int_{t_{0}}^{t}\left[  (s\gamma(s))^{\prime}\right]  _{+}h(s)ds.
\end{align*}
Using now the fact that%
\begin{align*}
t\left\vert h^{\prime}(t)\right\vert  &  \leq t\left\Vert x^{\prime
}(t)\right\Vert \left\Vert x(t)-x^{\ast}\right\Vert \\
&  \leq2\sqrt{t^{2}W(t)}\sqrt{h(t)},
\end{align*}
and applying the elementary inequality
\[
\forall a>0\forall b,x\in\mathbb{R},~-ax^{2}+bx\leq\frac{b^{2}}{4a}%
\]
with $x=\sqrt{h(t)},$ we get%
\begin{equation}
A\int_{t_{0}}^{t}sW(s)ds+Bt^{2}W(t)+\varepsilon h(t)\leq C_{0}+\int_{t_{0}%
}^{t}\left[  (s\gamma(s))^{\prime}\right]  _{+}h(s)ds \label{Najla}%
\end{equation}
where $A=1-\frac{3}{K}$ and $B=\frac{3}{2K}-\frac{1}{K-1-\varepsilon}.$

\noindent Since $K>3+3\varepsilon,$ the constants $A$ and $B$ are positive,
then%
\[
\varepsilon h(t)\leq C_{0}+\int_{t_{0}}^{t}\left[  (s\gamma(s))^{\prime
}\right]  _{+}h(s)ds.
\]
Hence, by using Gronwall's inequality and the assumption (\ref{EQ}), we deduce
that the function $h$ is bounded, more precisely, we get
\[
\sup_{t\geq t_{0}}h(t)\leq\frac{C_{0}}{\varepsilon}\exp(\frac{1}{\varepsilon
}\int_{t_{0}}^{+\infty}\left[  (s\gamma(s))^{\prime}\right]  _{+}ds).
\]
Therefore, we infer from (\ref{Najla}) that%

\begin{align}
\sup_{t\geq t_{0}}t^{2}W(t)  &  <+\infty,\label{eq9}\\
\int_{t_{0}}^{+\infty}sW(s)ds  &  <+\infty. \label{eq10}%
\end{align}
Combining (\ref{eq6}) and (\ref{eq10}) yields that the positive part $\left[
(t^{2}W(t))^{\prime}\right]  _{+}$ of $(t^{2}W(t)^{\prime}$ belongs to
$L^{1}([t_{0},+\infty\lbrack,\mathbb{R}),$ hence $m:=\lim_{t\rightarrow
+\infty}t^{2}W(t)$ exists. This limit $m$ must be equal to $0,$ since
otherwise $tW(t)\simeq\frac{m}{t}$ as $t\rightarrow+\infty$ which contradicts
(\ref{eq10}). It remains to prove the weak convergence of $x(t)$ as
$t\rightarrow+\infty.$ Let us notice that (\ref{eq9}) implies that
$\Phi(x(t))\rightarrow\min\Phi$ as $t\rightarrow+\infty.$ Hence by using the
weak lower semi-continuity of the function $\Phi,$ we deduce that if
$x(t_{n})\rightharpoonup\bar{x}$ weakly in $\mathcal{H}$ with $t_{n}%
\rightarrow+\infty$ then $\Phi(\bar{x})\leq\min\Phi$ which is equivalent to
$\bar{x}\in\arg\min\Phi.$ On the other hand, from the convex inequality
(\ref{M}) we deduce that $\langle\nabla\Phi(x),x^{\ast}-x\rangle\leq0$ for
every $x\in\mathcal{H}.$ Then Equation (\ref{equ}) implies%
\[
h^{\prime\prime}(t)+\gamma(t)h^{\prime}(t)\leq\left\Vert x^{\prime
}(t)\right\Vert ^{2}.
\]
Multiply this last equation by $e^{\Gamma(t,t_{0})},$ where $\Gamma
(t,s)=\int_{s}^{t}\gamma(\tau)dt,$ and integrate between $t_{0}$ and $t$, we
obtain%
\begin{equation}
h^{\prime}(t)\leq e^{-\Gamma(t,t_{0})}h^{\prime}(t_{0})+\int_{t_{0}}%
^{t}e^{-\Gamma(t,\tau)}\left\Vert x^{\prime}(\tau)\right\Vert ^{2}d\tau.
\label{eq11}%
\end{equation}
In view of the assumption (\ref{eq4}), a simple calculation gives%
\[
\forall s\geq t_{0},~\int_{s}^{+\infty}e^{-\Gamma(t,s)}dt\leq\frac{s}{K-1}.
\]
Hence by using (\ref{eq11}) and Fubini Theorem, we get%
\[
\int_{t_{0}}^{+\infty}[h^{\prime}(t)]_{+}dt\leq\frac{t_{0}\left\vert
h^{\prime}(t_{0})\right\vert }{K-1}+\frac{1}{K-1}\int_{t_{0}}^{+\infty}%
\tau\left\Vert x^{\prime}(\tau)\right\Vert ^{2}d\tau.
\]
Thanks to (\ref{eq10}), the right hand side of the last inequality is finite,
thus $\int_{t_{0}}^{+\infty}[h^{\prime}(t)]_{+}dt<+\infty$ which implies that
$\lim_{t\rightarrow+\infty}h(t)$ exists. Hence, for every $x^{\ast}\in\arg
\min\Phi,$ the limit of $\left\Vert x(t)-x^{\ast}\right\Vert $ as
$t\rightarrow+\infty$ exists. Therefore, Opial's lemma \cite{Op}, which we
recall below, guaranties the required weak convergence of $x(t)$ in
$\mathcal{H}$ to some element of $\arg\min\Phi.$
\end{proof}

\begin{lemma}
[Opial's lemma]Let $x:[t_{0},+\infty\lbrack\rightarrow\mathcal{H}.$ Assume
that there exists a nonempty subset $S$ of $\mathcal{H}$ such that:

\begin{enumerate}
\item[i)] If $t_{n}\rightarrow+\infty$ and $x(t_{n})\rightharpoonup x$ weakly
in $\mathcal{H}$ , then $x\in S.$

\item[ii)] For every $z\in S,$ $\lim_{t\rightarrow+\infty}\left\Vert
x(t)-z\right\Vert $ exists.
\end{enumerate}

\noindent Then there exists $z_{\infty}\in S$ such that $x(t)\rightharpoonup
z_{\infty}$ weakly in $\mathcal{H}$ as $t\rightarrow+\infty.$
\end{lemma}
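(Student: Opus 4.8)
The statement to be established is the classical lemma of Opial, so the plan is to carry out the standard argument in four steps: extract a compactness property from hypothesis (ii), identify every weak cluster point of $x(t)$ as an element of $S$ by means of (i), prove that this cluster point is unique using the Opial quadratic identity together with (ii), and finally upgrade ``unique weak cluster point'' to genuine weak convergence.

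First I would fix some $z_{0}\in S$ (which is possible since $S\neq\emptyset$); by (ii) the scalar function $t\mapsto\left\Vert x(t)-z_{0}\right\Vert $ has a finite limit, hence is bounded, so $x(\cdot)$ is bounded on $[t_{0},+\infty\lbrack$. Since bounded subsets of a Hilbert space are relatively weakly sequentially compact, every sequence $t_{n}\rightarrow+\infty$ admits a subsequence $(t_{n_{k}})$ along which $x(t_{n_{k}})\rightharpoonup\bar{x}$ for some $\bar{x}\in\mathcal{H}$, and hypothesis (i) forces $\bar{x}\in S$. Thus the set of weak cluster points of $x(t)$ as $t\rightarrow+\infty$ is a nonempty subset of $S$.

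Next I would prove that this cluster point is unique. Let $z_{1}$ and $z_{2}$ be two weak cluster points; by the previous step $z_{1},z_{2}\in S$, so by (ii) both $\left\Vert x(t)-z_{1}\right\Vert $ and $\left\Vert x(t)-z_{2}\right\Vert $ converge as $t\rightarrow+\infty$. From the identity
\[
\left\Vert x(t)-z_{1}\right\Vert ^{2}-\left\Vert x(t)-z_{2}\right\Vert ^{2}=2\langle x(t),z_{2}-z_{1}\rangle+\left\Vert z_{1}\right\Vert ^{2}-\left\Vert z_{2}\right\Vert ^{2}
\]
it follows that $\langle x(t),z_{2}-z_{1}\rangle$ has a limit as $t\rightarrow+\infty$. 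Evaluating this limit along a sequence realizing $x(\cdot)\rightharpoonup z_{1}$ gives $\langle z_{1},z_{2}-z_{1}\rangle$, and along one realizing $x(\cdot)\rightharpoonup z_{2}$ gives $\langle z_{2},z_{2}-z_{1}\rangle$; equating the two yields $\left\Vert z_{1}-z_{2}\right\Vert ^{2}=0$. Write $z_{\infty}$ for the resulting unique weak cluster point.

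Finally, to deduce that $x(t)\rightharpoonup z_{\infty}$ (and not merely along subsequences), I would fix $\xi\in\mathcal{H}$ and argue by contradiction: if $\langle x(t),\xi\rangle$ failed to converge, there would exist sequences $s_{n},\tau_{n}\rightarrow+\infty$ with $\langle x(s_{n}),\xi\rangle\rightarrow a$ and $\langle x(\tau_{n}),\xi\rangle\rightarrow b$ for some $a\neq b$; using the boundedness of $x(\cdot)$ one extracts subsequences along which $x$ converges weakly, and the two limits, being weak cluster points, must both equal $z_{\infty}$, forcing $a=\langle z_{\infty},\xi\rangle=b$, a contradiction. Hence $\langle x(t),\xi\rangle\rightarrow\langle z_{\infty},\xi\rangle$ for every $\xi\in\mathcal{H}$, that is, $x(t)\rightharpoonup z_{\infty}$, and $z_{\infty}\in S$ by (i). The argument has no serious obstacle; the only points requiring a little care are the correct use of (ii) to obtain the convergence of $\langle x(t),z_{2}-z_{1}\rangle$ in the uniqueness step, and the observation that the whole proof uses nothing about $\mathcal{H}$ beyond weak sequential compactness of its bounded sets.
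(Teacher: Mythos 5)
Your proof is correct and is the standard argument for Opial's lemma; the paper itself does not prove the lemma but merely recalls it with a citation to Opial's original article, so there is no in-paper proof to compare against. All four steps check out: the only point worth a footnote is that hypothesis (ii) gives boundedness of $\left\Vert x(t)-z_{0}\right\Vert$ only for $t$ large (no continuity of $x$ is assumed on compact intervals), which is all you need since every cluster-point extraction is along sequences $t_{n}\rightarrow+\infty$.
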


\noindent\textbf{Conclusion:} In this paper, we have proved that if the
damping term $\gamma(t)$ behaves at infinity like $\frac{K}{t}$ with $K>3,$
then every solution $x(t)$ of the equation (\ref{eq1}) converges weakly as
$t\rightarrow+\infty$ to a minimizer of $\Phi$ and the energy function $W(t)$
is $\circ(t^{-2})$. However, two important questions
remain open. The first one is on the behavior of the solution
$x(t)$  in the limit case $K=3$ and the second one is  about the effect of
the constant $K$ on the convergence rate of the associated energy function $W(t)$.\smallskip

\noindent \textbf{Acknowledgement:} The author wish to thank Prof. Adel Trabelsi for his comments which were very
useful to improve the representation of the paper.

\end{document}